\newtheorem{theorem}{Theorem}
\newtheorem*{theorem*}{Theorem}
\newtheorem{lemma}[theorem]{Lemma}
\newtheorem{claim}[theorem]{Claim}
\theoremstyle{definition}
\newtheorem*{definition*}{Definition}
\newtheorem*{lemma*}{Lemma}
\numberwithin{equation}{section}
\numberwithin{theorem}{section}
\newcommand{\N}{\mathbb{N}}
\newcommand{\Z}{\mathbb{Z}}
\DeclareMathOperator{\Aut}{Aut}
\def\cc{{\curvearrowright}}
\begin{document}

\title[]{Normal amenable subgroups of the automorphism group of the
  full shift}

\author{Joshua Frisch, Tomer Schlank and Omer Tamuz}
\address[J.\ Frisch, O.\ Tamuz]{California Institute of Technology.}
\address[T.\ Schlank]{Hebrew University.}


\thanks{This research was partially conducted at Microsoft Research,
  New England.}

\date{\today}

\begin{abstract}
  We show that every normal amenable subgroup of the automorphism
  group of the full shift is contained in its center. This follows
  from the analysis of this group's Furstenberg topological boundary,
  through the construction of a minimal and strongly proximal action.

  We extend this result to higher dimensional full shifts. This also
  provides a new proof of Ryan's Theorem and of the fact that these
  groups contain free groups.
\end{abstract}

\maketitle
\tableofcontents
\section{Introduction}
For $n \geq 2$, let $A = \{0,1,\ldots,n-1\}$ be a finite
alphabet. Equip the countable product $A^\Z$ with the product
topology. Let $\sigma \colon A^\Z \to A^\Z$ be the left shift, and let
$\Aut(A^\Z)$ be the group of homeomorphisms of $A^\Z$ that commute
with the shift $\sigma$. The space $A^\Z$ is called the {\em full
  shift}, and $\Aut(A^\Z)$ is called the {\em automorphism group of
  the full shift}. The elements of this group are known as (invertible)
{\em cellular automata}.

This group has been studied extensively, starting with
Hedlund~\cite{hedlund1969endomorphisms}, who showed that it is
countable, and that in many senses it very large; in particular, it
contains free groups of every rank, and hence it is
non-amenable\footnote{Hedlund attributes most of these results to
  Curtis, Lyndon and Hedlund.}. Ryan~\cite{ryan1972theshift} showed
that its center $Z(\Aut(A^\Z))$ is equal to $\Sigma(A^\Z)$, the group
consisting of the powers of the shift $\sigma$. Boyle, Lind and
Rudolph~\cite{boyle1988automorphism} made further progress, extended
many results to automorphism groups of shifts of finite type, and noted
that $\Aut(2^\Z)$ and $\Aut(4^\Z)$ are not algebraically
isomorphic\footnote{It is not known if $\Aut(2^\Z)$ and $\Aut(3^\Z)$
  are isomorphic.}. 

Our main theorem is a strengthening of Ryan's:
\begin{theorem}
  \label{thm:main-z}
  Every normal amenable subgroup of $\Aut(A^\Z)$ is contained in
  $\Sigma(A^\Z)$.
\end{theorem}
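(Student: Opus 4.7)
The plan is to exploit a classical principle from topological dynamics: if a group $G$ acts minimally and strongly proximally on a compact Hausdorff space $X$, then every normal amenable subgroup $N \triangleleft G$ acts trivially on $X$. The argument is short: amenability of $N$ yields an $N$-invariant Borel probability measure $\mu$ on $X$; normality makes the set $M_N$ of $N$-invariant measures a closed $G$-invariant subset of the measure space $\mathcal{M}(X)$; strong proximality of $G$ places some Dirac mass $\delta_x$ in the $G$-orbit closure of $\mu$, whence $\delta_x \in M_N$ and $x$ is $N$-fixed; the set of $N$-fixed points is closed and (since $N$ is normal) $G$-invariant, so by $G$-minimality it is all of $X$.

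Given this principle, Theorem~\ref{thm:main-z} reduces to producing a compact Hausdorff space $X$ equipped with an action of $\Aut(A^\Z)$ by homeomorphisms such that (i) the action is minimal, (ii) the action is strongly proximal, and (iii) the kernel of the action is contained in $\Sigma(A^\Z)$. Since $\Sigma(A^\Z)$ must lie in the kernel, $X$ ought to be built from $\sigma$-invariant data inside $A^\Z$. Natural candidates include the space of closed $\sigma$-invariant subsets of $A^\Z$ in the Hausdorff topology, the space of $\sigma$-invariant Borel probability measures, or a space of marked periodic configurations inside $A^\Z$. Strong proximality would then be verified by exhibiting, for any finite family of measures on $X$, a single element of $\Aut(A^\Z)$ that drives them all near a common Dirac mass; such collapse maps ought to be buildable using the combinatorial richness of $\Aut(A^\Z)$ via the Curtis--Hedlund--Lyndon local-rule formalism and marker techniques, and minimality would follow from a parallel density argument.

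The main obstacle is the construction of $X$ itself, and specifically the tension between (ii) and (iii): a smaller or more homogeneous $X$ makes strong proximality easier but risks enlarging the kernel, while a richer $X$ makes the kernel computation easier but complicates proximality. In particular, checking that the kernel contains no non-shift automorphism is essentially a sharpened form of Ryan's theorem — one must show that every $\phi \in \Aut(A^\Z) \setminus \Sigma(A^\Z)$ moves some point of $X$ — and this is precisely why the paper recovers a new proof of Ryan's theorem as a byproduct. Once $X$ satisfying (i)--(iii) is constructed, Theorem~\ref{thm:main-z} follows at once from the principle above, applied to any normal amenable subgroup $N$.
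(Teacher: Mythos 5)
Your reduction is correct and is exactly the paper's: the ``classical principle'' you state is Furman's theorem (cited in the paper as Theorem~\ref{thm:furman}), and the paper likewise derives Theorem~\ref{thm:main-z} in one line from the existence of a minimal, strongly proximal $\Aut(A^\Z)$-space on which the induced action of $\Aut(A^\Z)/\Sigma(A^\Z)$ is faithful. Your proof sketch of the principle itself (invariant measure for $N$, push to a Dirac mass by strong proximality, note the $N$-fixed-point set is closed, $G$-invariant, and nonempty, hence everything by minimality) is sound.

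The gap is that you never construct $X$, and the construction is where essentially all of the content of the paper lies. You correctly identify it as ``the main obstacle'' and then list candidates --- the hyperspace of subshifts, the simplex of $\sigma$-invariant measures, marked periodic configurations --- none of which is what the paper uses, and for none of which do you verify (or make plausible in any checkable way) minimality, strong proximality, or the kernel computation. The paper's space is quite specific: it starts from the $\sigma$-invariant set $A^\Z_*$ of non-constant configurations with a constant infinite left tail, quotients by the shift to obtain the space $\Omega$ of one-sided configurations with $x_0\neq x_1$, passes to the finite-index subgroup $G^*$ preserving each fiber $\Omega^a=\{x: x_0=a\}$, proves that $G^*\cc\Omega^a$ is \emph{extremely} proximal by exhibiting explicit marker-built involutive cellular automata of the form $S D E\mapsto S\pi(D)E$ that collapse any proper closed subset to a single point, and then takes $\bar\Omega$ to be the product $\Omega^0\times\cdots\times\Omega^{n-1}$, on which the diagonal action is strongly proximal (extreme proximality implies strong proximality, and strong proximality is closed under products) and minimal (again by an explicit automaton construction). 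Faithfulness modulo $\Sigma$ comes from density of $A^\Z_*$ in $A^\Z$. Without carrying out some such construction --- and in particular without the explicit cellular automata that witness proximality and minimality --- your argument establishes only that the theorem \emph{would follow} from the existence of a suitable boundary, not the theorem itself.
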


For every group $G$ there exists a maximal normal amenable subgroup
called the {\em amenable radical} (see for
example~\cite{nevo1994boundary}); we denote it by $\sqrt{G}$. Thus
this theorem in fact states that $\sqrt{\Aut(A^\Z)} = \Sigma(A^\Z)$.

Furman~\cite{furman2003minimal} showed that the amenable radical is
the kernel of a group's action on its {\em Furstenberg topological
  boundary}. A {\em topological boundary} of a group $G$ is a compact
$G$-space $X$ such that the continuous $G$ action on $X$ is minimal
and strongly proximal~\cite{furstenberg1963poisson,
  glasner1974topological}. The Furstenberg topological boundary $B(G)$
(or the {\em maximal boundary}) is the universal topological boundary,
in the sense that it admits a $G$-equivariant map to any
$G$-boundary. Since $Z(G) \subseteq \sqrt{G}$, and since the extension
of a faithful action is faithful, it follows that
\begin{theorem}[Furman~\cite{furman2003minimal}]
  \label{thm:furman}
  If a group $G$ has a topological boundary $X$ on which the action
  of $G/Z(G)$ is faithful, then $\sqrt{G}=Z(G)$.
\end{theorem}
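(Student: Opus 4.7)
The plan is to assemble the three ingredients mentioned in the paragraph preceding the statement: (i) the Furstenberg boundary $B(G)$ is universal among topological boundaries, (ii) the amenable radical $\sqrt{G}$ coincides with the kernel of the $G$-action on $B(G)$, and (iii) extensions of faithful actions remain faithful. The center is contained in the amenable radical because $Z(G)$ is a normal abelian, hence amenable, subgroup; so one inclusion $Z(G) \subseteq \sqrt{G}$ is automatic. The content is the reverse inclusion $\sqrt{G} \subseteq Z(G)$.

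First, I would invoke the universality of $B(G)$ to produce a $G$-equivariant continuous map $\phi \colon B(G) \to X$. Minimality of the $G$-action on $X$ forces $\phi(B(G))$ to be dense, and compactness of $B(G)$ together with continuity of $\phi$ makes the image closed, so $\phi$ is surjective. Next, any element $g \in \sqrt{G}$ acts trivially on $B(G)$ by Furman's identification of the kernel, and therefore for every $x = \phi(b) \in X$ one has $g \cdot x = g\cdot \phi(b) = \phi(g \cdot b) = \phi(b) = x$. Thus $g$ acts trivially on $X$, i.e.\ $\sqrt{G}$ lies in the kernel of $G \cc X$.

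By hypothesis the induced $G/Z(G)$-action on $X$ is faithful, which is to say that the kernel of $G \cc X$ is exactly $Z(G)$. Combining with the previous step gives $\sqrt{G} \subseteq Z(G)$, and together with the automatic inclusion we conclude $\sqrt{G} = Z(G)$.

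Since all the heavy lifting (universality of $B(G)$ and the identification of its kernel with $\sqrt{G}$) is imported as a black box from the cited works, no step here is a true obstacle; the only point requiring care is the surjectivity of the equivariant map $\phi$, which follows routinely from minimality and compactness. The real work of the paper will instead lie in exhibiting a concrete boundary $X$ of $\Aut(A^\Z)$ on which the quotient by $\Sigma(A^\Z) = Z(\Aut(A^\Z))$ acts faithfully, so that Theorem~\ref{thm:furman} can be applied to deduce Theorem~\ref{thm:main-z}.
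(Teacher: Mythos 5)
Your proposal is correct and follows exactly the argument the paper sketches in the paragraph preceding the statement: use universality of $B(G)$ to get an equivariant surjection onto $X$, identify $\sqrt{G}$ with the kernel of the action on $B(G)$ via Furman's result, and conclude $\sqrt{G}$ lies in the kernel of the action on $X$, which is $Z(G)$ by faithfulness. The paper gives no further proof (the result is imported from Furman), so there is nothing to add.
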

To prove our main result, Theorem~\ref{thm:main-z}, we construct a
topological boundary of $\Aut(A^\Z)$ whose kernel is equal to
$\Sigma(A^\Z)$; our theorem then follows from Furman's. To show that
our action is strongly proximal we use Glasner's notion of an {\em
  extremely proximal} action~\cite{glasner1974topological}. We define
these terms precisely in the next section.

One can replace $\Z$ with $\Z^d$ in the discussion above. In this case
the shift $\sigma$ and the group it generates are replaced with the
$d$ shifts which generate $\Sigma(A^{\Z^d})$. The automorphism group
$\Aut(A^{\Z^d})$ is defined to be group of homeomorphisms of
$A^{\Z^d}$ that commutes with $\Sigma(A^{\Z^d})$.

Hochman~\cite{hochman2010automorphism} proves in this setting an
analogue of Ryan's Theorem, namely that
$Z(\Aut(A^{\Z^d})) = \Sigma(A^{\Z^d})$, and in fact shows that the
same holds for the automorphism groups of a large class of
shifts. We likewise strengthen his
theorem, for the case of the full shift.
\begin{theorem}
  \label{thm:main-z-d}
  Every normal amenable subgroup of $\Aut(A^{\Z^d})$ is contained in
  $\Sigma(A^{\Z^d})$.
\end{theorem}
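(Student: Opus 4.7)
My plan is to reduce Theorem~\ref{thm:main-z-d} to the construction of a single topological boundary, as in the one-dimensional case. Hochman's analogue of Ryan's theorem \cite{hochman2010automorphism} identifies $Z(\Aut(A^{\Z^d})) = \Sigma(A^{\Z^d})$, so by Furman's Theorem~\ref{thm:furman} it suffices to build a topological boundary of $\Aut(A^{\Z^d})$ on which the quotient $\Aut(A^{\Z^d})/\Sigma(A^{\Z^d})$ acts faithfully. Once such a boundary is produced the theorem follows formally, so the entire content of the argument lies in the construction.

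To build the boundary, I would fix a distinguished direction $v \in \Z^d$ and work with configurations restricted to a half-space $H_v = \{x \in \Z^d : x \cdot v \geq 0\}$, modulo an equivalence that identifies configurations agreeing outside a finite set. The goal is to mimic whatever construction is used for $d = 1$, where the two ends of $\Z$ play the analogous role. To show that $\Aut(A^{\Z^d})$ acts minimally and strongly proximally on this space I would appeal to Glasner's notion of an extremely proximal action \cite{glasner1974topological}: the Curtis--Hedlund--Lyndon theorem presents every cellular automaton as a finite-range block code, and by composing such block codes one should be able to push any finite collection of open sets into an arbitrarily small prescribed neighbourhood, just as presumably is done for $d = 1$.

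The main obstacle I anticipate is the dimensional asymmetry. In $\Z$ the boundary at infinity consists of just two points, whereas in $\Z^d$ there is a $(d-1)$-dimensional sphere of directions, and choosing a single half-space $H_v$ may produce a boundary whose kernel is strictly larger than $\Sigma(A^{\Z^d})$: an automorphism could act non-trivially only on patterns transverse to $v$ and remain invisible to the chosen asymptotics. A natural remedy is to form the $G$-equivariant product of the boundaries obtained from different choices of $v$ and then pass to a minimal closed $G$-invariant subsystem, which should again be a boundary and whose kernel is contained in the intersection of the individual kernels. The crux of the argument is then to verify that this intersection is exactly $\Sigma(A^{\Z^d})$, i.e.\ that every non-shift cellular automaton has a detectable asymptotic effect in at least one direction; I expect this step to require the full finite-range structure of cellular automata together with a direction-switching argument that lets one detect modifications in any coordinate.
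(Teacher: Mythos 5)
Your reduction to Furman's Theorem~\ref{thm:furman} (using Hochman's computation of the center) is the right formal frame, but the proposal defers exactly the steps that would constitute the proof: no boundary is actually constructed, and minimality, strong proximality, and---most importantly---the computation of the kernel are all left as things you ``expect'' to be able to do. Beyond incompleteness, two specific assertions are problematic. First, for a single direction ${\bf v}$, the space of configurations on a half-space modulo finite changes is acted on nontrivially by the $d-1$ transverse shifts (a transverse shift generically alters a half-space configuration in infinitely many coordinates), so $\Sigma(A^{\Z^d})$ is not contained in the kernel and the action does not factor through $\Aut(A^{\Z^d})/\Sigma(A^{\Z^d})$ at all; forming products over several directions ${\bf v}$ makes this worse, not better. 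Second, the kernel of the action on a minimal closed invariant subsystem of a product \emph{contains} the kernel of the product action (an element fixing every point of the big space fixes every point of any subspace), so it can only be larger than the intersection of the individual kernels, not ``contained in'' it as you claim; passing to a minimal subsystem is precisely where faithfulness can be lost, and you give no argument that it is not.

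The paper avoids all of this by not constructing a boundary in dimension $d$ at all. For each $k$ it chooses a rank-$(d-1)$ sublattice $U_k \leq \Z^d$ all of whose nonzero vectors have norm greater than $k$ (Claim~\ref{clm:norm}); the $U_k$-periodic configurations $A^{\Z^d}_{U_k}$ form a closed invariant subset homeomorphic to $A^\Z$, yielding a homomorphism $\phi_k \colon \Aut(A^{\Z^d}) \to \Aut(A^\Z)$. Since a homomorphism carries the amenable radical into the preimage of the amenable radical (Lemma~\ref{lemma:radical}), any $g$ in $\sqrt{\Aut(A^{\Z^d})}$ with memory less than $k$ has $\phi_k(g) \in \sqrt{\Aut(A^\Z)} = \Sigma(A^\Z)$ by the already-established one-dimensional Theorem~\ref{thm:main-z}, and a uniqueness argument for memory-$k$ automata restricted to $A^{\Z^d}_{U_k}$ (Claims~\ref{clm:ball} and~\ref{clm:A_k}) then forces $g$ itself to be a shift. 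If you wish to pursue your route you must actually carry out a genuinely $d$-dimensional boundary construction and compute its kernel, which is substantially harder than this reduction; as it stands the crux of your argument is missing.
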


Recently, Cyr and Kra~\cite{cyr2015automorphism} showed that some
subshifts\footnote{A {\em shift} or {\em subshift} of $A^\Z$ is a
  closed, shift-invariant subset of $A^{\Z}$.} of $A^\Z$ with
sub-exponential growth have amenable automorphism groups. Their work
follows a number of papers that show that the automorphism group of
``small'' shifts is indeed ``small''~\cite{coven2015automorphisms,
  cyr2014automorphism, cyr2015automorphism2, donoso2015automorphism,
  salo2014toeplitz, salo2014block}.  A natural question is the
following: for which subshifts of $A^\Z$ does it still hold that the
amenable radical of the automorphism group is equal to its center?

\section{A boundary of $\Aut(A^\Z)$}
In this section we denote $G = \Aut(A^\Z)$ and $\Sigma = \Sigma(A^\Z)$.

Let $A^\Z_p \subset A^\Z$ be the set of configurations (as we shall
refer to elements of $A^\Z$) that have a constant infinite prefix:
\begin{align*}
  A^\Z_p = \{x \in A^\Z \,:\, \exists m \in \Z,a \in A \mbox{
  s.t. } x_k = a \mbox{ for all } k \leq m\}.
\end{align*}
Note that this set is invariant to the $G$-action.

Let $A^\Z_* \subset A^\Z_p$ be the result of the exclusion from
$A^\Z_p$ of the $n$ constant configurations. This set is still
$G$-invariant. Given $x \in A^\Z_*$, let $\ell(x)$ denote the last
coordinate of the constant prefix:
\begin{align*}
  \ell(x) = \min\{m \in Z\,:\, x_m \neq x_{m+1}\}.
\end{align*}
Note that $\ell(\sigma x) = \ell(x) - 1$, in general $\ell(\sigma^k x)
= \ell(x)-k$, and in particular $\ell(\sigma^{\ell(x)}x) = 0$.

Let $\Omega$ be given by
\begin{align*}
  \Omega = \{x \colon \{0,1,2,\ldots\} \to A \,:\, x_0 \neq x_1\}.
\end{align*}
This is the space of one-sided infinite configurations, in which the
zeroth symbol is different than the first. We equip it with the
natural topology induced from the product topology.

To define a $G$ action on $\Omega$, let $\varphi \colon \Omega \to
A^\Z_*$ assign to $x \in \Omega$ the two-sided configuration in
which a one-sided, infinite constant $x_0$ prefix precedes
$x_1x_2\ldots$. Formally:
\begin{align*}
  [\varphi(x)]_m =
  \begin{cases}
    x_0& m \leq 0\\
    x_m&\mbox{otherwise}
  \end{cases}.
\end{align*}
Note that the image of $\varphi$ is all the configurations in
$A^\Z_*$ for which $\ell(x) = 0$. Hence
$\varphi^{-1}(\sigma^{\ell(x)}x)$ is well defined for every $x \in
A^\Z_*$. Accordingly, define $\psi \colon A^\Z_* \to \Omega$ by
$\psi(x) = \varphi^{-1}(\sigma^{\ell(x)}x)$; note that $\psi \circ
\varphi$ is the identity. Now, given $g \in G$ and $x \in \Omega$ let
\begin{align*}
  g x = \psi\left(g\varphi(x)\right). 
\end{align*}
It is straightforward to verify that this is indeed a $G$-action on
$\Omega$. Note also that this action factors through $G/\Sigma$, since
$\sigma f = \sigma$ for all $x \in \Omega$. Additionally, it is easy
to see that the $G/\Sigma$-action is faithful; this is due to the fact
that $\psi^{-1}(\Omega)$ is dense in $A^\Z$.

For $a \in A$, let $\Omega^a = \{x \in \Omega \,:\, x_0=a\}$. Let $G^a
\subset G$ be the finite index subgroup that fixes $\Omega^a$, and let
$G^* = \cap_a G^a$.

\subsection{Extreme proximality}

An action $H \cc X$ of a discrete group on a compact metric space is
said to be {\em extremely proximal} if, for any closed $Y \subsetneq
X$, there exists a sequence $\{h_k\} \subset H$ such that $\lim_k h_k Y$
is a singleton, where the limit is taken in the Hausdorff
topology~\cite{glasner1974topological}.

An action $H \cc X$ of a discrete group on a compact Hausdorff space
is said to be {\em strongly proximal} if, for any Borel probability
measure $\mu$ on $X$, there exists a sequence $\{h_k\} \subset H$ such
that $\lim_k h_k\mu$ is a point mass, where the limit is taken in the
weak* topology~\cite{furstenberg1963poisson, glasner1974topological}.
We prove the following theorem in Section~\ref{sec:cellular}.
\begin{theorem}
  \label{prop:extremely-proximal}
  The action $G^* \cc \Omega^a$ is extremely proximal for all $a \in A$.
\end{theorem}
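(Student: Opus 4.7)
My plan is to prove extreme proximality by reducing it to a one-step contraction lemma and then producing the required automorphism by a marker construction.

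\textbf{Reduction.} It will suffice to prove the following: for every closed proper $Y \subsetneq \Omega^a$ and every $M \in \mathbb{N}$ there exist $h \in G^*$ and a word $v \in A^M$ with $v_0 = a$, $v_1 \neq a$, such that $hY \subseteq [v] \cap \Omega^a$. Granted this, choose $M_k \to \infty$ with corresponding $h_k \in G^*$; the closed sets $h_k Y$ have diameters tending to $0$, and compactness of the Hausdorff topology on the closed subsets of $\Omega^a$ yields a subsequence converging in Hausdorff to a limit of diameter $0$, namely a singleton.

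\textbf{Set-up.} Fix $Y \subsetneq \Omega^a$ closed. Since $\Omega^a \setminus Y$ is nonempty and open, choose $N \geq 2$ and a word $w = a w_1 \ldots w_{N-1}$ with $w_1 \neq a$ such that the cylinder $[w] \cap \Omega^a$ is disjoint from $Y$; then every $y \in Y$ has $(y_0, \ldots, y_{N-1}) \neq w$. The role of $[w]$ is to give us ``free space'' into which we can route the bad initial segments of $y \in Y$ when we construct the contracting automorphism. By iterating the construction --- at step $k$ the set $Y$ has already been contracted into a sub-cylinder $[v_{k-1}]$, and a new missing sub-cylinder of $[v_{k-1}]$ can be located --- we can drive $M_k$ to infinity as required in the reduction.

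\textbf{Construction.} The core step is to produce the contracting $h \in G^*$. I would build $h$ as a Hedlund-style marker automorphism of $A^{\mathbb{Z}}$ (cf.\ \cite{hedlund1969endomorphisms}): pick a marker word $\mu \in A^{*}$ containing at least two distinct letters --- hence absent from every constant configuration, so that $h$ fixes every constant and lies in $G^*$ --- and have $h$ act as the identity outside windows bounded by occurrences of $\mu$, permuting the contents of each such window according to a prescribed block permutation $\pi$. The permutation $\pi$ is designed so that when $h$ is applied to $\varphi(y)$ for any $y \in Y$ and the result is shifted back to $\ell = 0$, the resulting one-sided sequence has $v$ as a prefix. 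The detectability of the transition zone of $\varphi(y)$ --- the first $a$-to-non-$a$ boundary, recognizable by a finite local rule --- together with the combinatorics of the marker $\mu$ allows $h$ to act precisely on the right window relative to the transition.

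\textbf{Main obstacle.} The heart of the proof is this last construction. The automorphism $h$ must simultaneously commute with the shift, be a bijection of $A^{\mathbb{Z}}$, fix every constant configuration, and execute the prescribed block permutation $\pi$ on the transition zone of $\varphi(y)$ for every $y \in Y$. The difficulty is that a finite-range local rule cannot directly detect the semi-infinite constant tail of $\varphi(y)$; the construction must exploit the local signature of the $a$-to-non-$a$ transition, and the marker $\mu$ must be arranged to appear near this transition so as to trigger $\pi$ in exactly the right place. This is the standard terrain of marker automorphisms in symbolic dynamics, but carrying it out in this concrete setting --- and verifying that the resulting $h$ indeed delivers the desired common prefix on the image of every $y \in Y$ --- is the main technical work of the proof.
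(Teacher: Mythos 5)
Your reduction (contract $Y$ into cylinders of length $M_k\to\infty$, then extract a Hausdorff-convergent subsequence) is sound, but everything that makes the theorem true is deferred to the construction of $h$, which you explicitly leave as ``the main technical work.'' Worse, the mechanism you sketch points in a direction that cannot succeed. If $h$ is a marker automorphism that permutes the contents of a window sitting at a fixed position relative to the $a$-to-non-$a$ transition of $\varphi(y)$, and if the transition position $\ell(\varphi(y))$ is unchanged, then the induced map $\psi\circ h\circ\varphi$ on $\Omega^a$ is a \emph{permutation} of the length-$K$ cylinders of $\Omega^a$ for $K$ past the window. A closed set $Y$ that misses only one cylinder $[w]$ of length $N$ meets all but a $|A|^{-(N-1)}$-fraction of these cylinders, so no such permutation can inject them into a single cylinder $[v]$ with $|v|$ large: ``routing the bad prefixes into the free space $[w]$'' is blocked by counting. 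The iteration step has a related gap: nothing guarantees that $hY$ misses a sub-cylinder of $[v_{k-1}]$, since $hY$ could equal $[v_{k-1}]\cap\Omega^a$.

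The missing idea is that the contraction must come from \emph{moving the transition point}, i.e.\ from an automorphism that rewrites part of the constant prefix of $\varphi(y)$, so that after the renormalization $\psi$ the induced map on $\Omega^a$ is no longer cylinder-count-preserving. This is exactly what the paper does: with $o=0111\cdots$ and $C_m=\{f\in\Omega^0 : r(f)=m\}$ (where $r$ is the length of the initial run of $1$s), the involution $g_k$ replaces the last $k$ zeros of the constant prefix by $1^k$ whenever $r(f)<k$, sending $01^y0z\mapsto 01^{k+y}0z$ in renormalized coordinates; invertibility is immediate because membership in the image is detected by $r\geq k$, and $g_kC_m\subseteq C_{m+k}$ pushes everything into arbitrarily small neighborhoods of the single target point $o$. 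The paper also needs an ingredient absent from your plan: minimality of $G^*\cc\Omega^0$ (Theorem~\ref{thm:minimal}) is invoked first to move the given closed set off $o$, so that it lies in a finite union $\cup_{i\leq m}C_i$ before the $g_k$ are applied. Without the target point, the prefix-rewriting mechanism, and this use of minimality, the proposal is a plan rather than a proof.
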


We can now conclude that $G^*$ is not amenable, and in fact includes a
free group with two generators. This follows from the following
theorem.
\begin{theorem}[Glasner~\cite{glasner1974topological}]
  If a group has a non-trivial minimal extremely proximal action then
  it contains a free subgroup on two generators.
\end{theorem}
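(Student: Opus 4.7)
The plan is to invoke the Klein Ping-Pong Lemma. Specifically, we aim to produce elements $g_1,g_2 \in H$ together with four pairwise disjoint nonempty open sets $U_1,V_1,U_2,V_2 \subset X$ such that $g_i(X\setminus U_i) \subset V_i$ for $i=1,2$. Once these are in hand, disjointness of $U_i$ and $V_i$ yields $g_i^{-1}(X\setminus V_i) \subset U_i$, and a short induction (using $V_i \subset X\setminus U_i$ and $U_i \subset X\setminus V_i$) gives $g_i^n(X\setminus U_i) \subset V_i$ and $g_i^{-n}(X\setminus V_i) \subset U_i$ for every $n \geq 1$. Setting $A_i := U_i \cup V_i$, pairwise disjointness of the four open sets yields $A_1 \cap A_2 = \emptyset$ and $A_{3-i} \subset (X \setminus U_i) \cap (X \setminus V_i)$, so $g_i^n(A_{3-i}) \subset A_i$ for every $n \neq 0$. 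The Ping-Pong Lemma then gives $\langle g_1,g_2\rangle \cong F_2$.

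To carry out the construction, note first that since $H \cc X$ is minimal and non-trivial, the $H$-invariant set of isolated points of $X$ must be empty (otherwise $X$ would be finite and discrete, a degenerate case in which extreme proximality is essentially vacuous). Hence $X$ is a perfect compact metric space, and we may choose four pairwise disjoint nonempty open sets $U_1,V_1,U_2,V_2$. Fix $i \in \{1,2\}$. The set $Y_i := X \setminus U_i$ is closed and properly contained in $X$, so by extreme proximality there is a sequence $\{h_k\} \subset H$ with $h_k Y_i$ converging in the Hausdorff topology to a singleton $\{y_i\}$. The point $y_i$ need not lie in $V_i$, but by minimality the orbit $H y_i$ is dense in $X$, so we can pick $t_i \in H$ with $t_i y_i \in V_i$. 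By continuity of $t_i$ there is a neighborhood $W$ of $y_i$ with $t_i W \subset V_i$, and for $k$ sufficiently large $h_k Y_i \subset W$; setting $g_i := t_i h_k$ then gives $g_i(X\setminus U_i) \subset V_i$, as required.

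The main obstacle is this construction step: extreme proximality alone offers no control over where the contracted image $y_i$ lies, and the essential ingredient is its combination with minimality, which supplies dense orbits for every point and thereby lets us transport $y_i$ into the prescribed target open set $V_i$. The remaining work is routine: the four-set arrangement guarantees $V_i \subsetneq X\setminus U_i$, which rules out $g_i^n = \mathrm{id}$ for any $n \neq 0$, so each $g_i$ automatically has infinite order, and the Ping-Pong Lemma then delivers the free-group conclusion.
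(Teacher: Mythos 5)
Your argument is correct, but note that the paper does not prove this statement at all: it is quoted from Glasner's 1974 paper and used as a black box, so there is no internal proof to compare against. What you have written is essentially the classical argument (and in substance Glasner's own): use extreme proximality to contract the complement of a prescribed open set to a point, use minimality to transport that point into a second prescribed open set, and then run Klein's ping-pong criterion on the resulting ``Schottky pair.'' Two small points deserve attention. First, your dismissal of the finite case is slightly glib: a two-point space with a transitive $\Z/2\Z$-action is minimal and \emph{vacuously} extremely proximal (every proper closed nonempty subset is already a singleton), yet the group contains no free subgroup; so the theorem as literally stated requires ``non-trivial'' to be read as ``$X$ infinite,'' which is what your perfectness argument then delivers (the set of non-isolated points is closed and invariant, hence empty or all of $X$ by minimality, so $X$ is either finite or perfect). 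In the paper's application $\Omega^a$ is infinite, so this is harmless, but it should be stated as a hypothesis rather than waved away. Second, your verification that each $g_i$ has infinite order is exactly what is needed to apply the two-set version of the ping-pong lemma (which requires one of the two cyclic subgroups to have order at least $3$), so it is good that you included it. With those caveats the proof is complete and matches the standard route; it buys the reader a self-contained treatment where the paper simply cites the literature.
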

The fact that $G^*$ is contains a free subgroup on two generators was
already shown in~\cite{hedlund1969endomorphisms}, and thus this
provides a new proof of that fact.

\subsection{Induction to a strongly proximal, minimal action}

Recall that $\Omega^a$ is the subset of all $x \in \Omega$ such that
$x_0=a$.  Let $\bar\Omega$ be the collection of subsets of $\Omega$
which intersect each $\Omega^a$ in exactly one element. Hence every
element of $\bar\Omega$ is a set of size $n = |A|$, and can be written
as
\begin{align*}
  \{x^0,x^1,\ldots,x^{n-1}\}
\end{align*}
with $x^a_0 = a$ for all $a \in A$. The topology on $\bar\Omega$ is
inherited from $\Omega$ in the obvious way.

There is a natural $G$ action on $\bar\Omega$, derived from the action
on $\Omega$, and hence on the subsets of $\Omega$.

Since the elements of $G^*$ preserve $x_0$ - that is, $[g x]_0 = x_0$
for all $x \in \Omega$ and $g \in G^*$ - the action of $G^*$ on
$\bar\Omega$ is isomorphic to the diagonal action
$G^* \curvearrowright \Omega^0 \times \Omega^1 \times \cdots \times
\Omega^{n-1}$.
Since each action $G^* \curvearrowright \Omega^a$ is extremely
proximal (Theorem~\ref{prop:extremely-proximal}), the product action
$G^* \curvearrowright \Omega^0 \times \Omega^1 \times \cdots \times
\Omega^{n-1}$
is strongly proximal. This follows from the facts that (i) extremely
proximal actions are strongly proximal and (ii) that a product of
strongly proximal actions is likewise strongly
proximal~\cite{glasner1974topological}. Hence we have shown the
following theorem.
\begin{theorem}
  \label{thm:proximal}
  The $G^*$ action on $\bar\Omega$ is strongly proximal.
\end{theorem}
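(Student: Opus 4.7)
The plan is to reduce Theorem~\ref{thm:proximal} to three ingredients: the extreme proximality of each factor $G^* \cc \Omega^a$ (Theorem~\ref{prop:extremely-proximal}), and two general facts from~\cite{glasner1974topological} --- that extreme proximality implies strong proximality, and that strong proximality is preserved under finite products. Once these are in place, the theorem follows by identifying $\bar\Omega$ with $\prod_{a \in A} \Omega^a$ as $G^*$-spaces.

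The first step I would carry out is this identification. An element $\{x^0,\dots,x^{n-1}\} \in \bar\Omega$ is uniquely labelled by the zeroth coordinate of each of its members, so the map $\{x^0,\dots,x^{n-1}\} \mapsto (x^0,\dots,x^{n-1})$ is a homeomorphism onto $\prod_{a \in A} \Omega^a$ (each $\Omega^a$ is clopen in $\Omega$, so the product topology agrees with the topology $\bar\Omega$ inherits from $\Omega$). Because every $g \in G^* = \bigcap_a G^a$ stabilizes each $\Omega^a$ setwise, the $G^*$-action on $\bar\Omega$ becomes, under this identification, exactly the diagonal action $g\cdot(x^0,\dots,x^{n-1}) = (gx^0,\dots,gx^{n-1})$.

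The second step is to invoke the two abstract facts. For the first, given a Borel probability $\mu$ on $\Omega^a$, I would select a point $x$ with $\mu\{x\}=0$ (possible since at most countably many atoms exist for a Borel probability on a metric space), choose an open neighborhood $U \ni x$ with $\mu(U) < \varepsilon$, and apply extreme proximality to the closed set $Y = \Omega^a \setminus U$ to obtain $\{h_k\}$ with $h_k Y$ Hausdorff-converging to a singleton $\{y\}$; any weak$^*$ limit of $h_k\mu$ then differs from $\delta_y$ in total variation by at most $\varepsilon$, and a diagonal argument over $\varepsilon \to 0$ yields strong proximality of $G^* \cc \Omega^a$. Closure of strong proximality under finite products with the same acting group is the classical statement from~\cite{glasner1974topological}, which I would cite directly.

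Combining these, the diagonal $G^*$-action on $\prod_a \Omega^a$ is strongly proximal, and by the first step this is the same action as $G^* \cc \bar\Omega$. I do not anticipate a genuine obstacle at this stage: the only point requiring care is the coordinatewise action of $G^*$ on the product, which is immediate from the definition $G^* = \bigcap_a G^a$. The real work of the argument is entirely absorbed into Theorem~\ref{prop:extremely-proximal}, whose proof is deferred to Section~\ref{sec:cellular}; that is where the main obstacle actually lies.
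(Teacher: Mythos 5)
Your proposal is correct and follows essentially the same route as the paper: identify $\bar\Omega$ with the product $\Omega^0\times\cdots\times\Omega^{n-1}$ carrying the diagonal $G^*$-action, and combine Theorem~\ref{prop:extremely-proximal} with the two facts from~\cite{glasner1974topological} that extreme proximality implies strong proximality and that strong proximality is closed under products. The only difference is that you supply a (correct) proof of the first of these facts where the paper simply cites Glasner.
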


We next show that this action is also minimal.
\begin{theorem}
  \label{thm:minimal}
  The $G^*$ actions on both $\bar\Omega$ and $\Omega^0$ are minimal.
\end{theorem}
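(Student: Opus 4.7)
The plan is to reduce the statement about $\Omega^0$ to the one about $\bar\Omega$, and then to verify minimality of $\bar\Omega$ by constructing suitable cellular automata. The first reduction is immediate: the projection $\bar\Omega \to \Omega^0$ sending an $n$-tuple to its $0$-th component is continuous, surjective and $G^*$-equivariant (identifying $\bar\Omega$ with $\prod_{a\in A}\Omega^a$ equipped with the diagonal action), so the image of any dense orbit is dense, and minimality of $\bar\Omega$ implies minimality of $\Omega^0$.

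To prove minimality of $\bar\Omega$, fix $\bar x = (x^a)_{a\in A}$ and $\bar y = (y^a)_{a\in A}$ in $\bar\Omega$ and $N \geq 1$. It suffices to produce a single $g \in G^*$ such that $[g x^a]_i = y^a_i$ for every $a \in A$ and every $i \in \{0,\ldots,N\}$. I would construct $g$ as a cellular automaton with local rule of a large radius $K$ (say $K = 2N+2$), defined as follows. Given a window $w = (w_{-K},\ldots,w_K)$, identify the maximal constant block of value $a' = w_{-K}$ and length $L$ at the left end of the window. If $L \geq N+1$ and the offset $k := K - L + 1$ lies in $\{1,\ldots,N\}$, read the next $N$ symbols $c = (w_{1-k},\ldots,w_{N-k})$ and output the $k$-th coordinate of $\tau_{a'}(c)$, where $\tau_{a'}$ is the fixed involution on $A^N$ that swaps $(x^{a'}_1,\ldots,x^{a'}_N)$ with $(y^{a'}_1,\ldots,y^{a'}_N)$ and fixes every other tuple. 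In every other situation (including the all-constant window) the rule just outputs $w_0$.

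By construction the rule is shift-equivariant and fixes every constant configuration, so $g \in G^*$. Applied to $\varphi(x^a)$: for each $j \in \{1,\ldots,N\}$ the rule recognizes the leftmost $a$-prefix in the window and the content tuple $(x^a_1,\ldots,x^a_N)$, and outputs $y^a_j$; for $j \leq 0$ the rule outputs $a$. Hence $g\varphi(x^a)$ still has its $a$-prefix ending at position $0$, immediately followed by $(y^a_1,\ldots,y^a_N)$, and since $y^a_1 \neq a$ we get $\ell(g\varphi(x^a)) = 0$. Therefore $g x^a = \psi(g \varphi(x^a))$ agrees with $y^a$ on $\{0,\ldots,N\}$, as required.

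The main obstacle is proving that $g$ is actually an automorphism, i.e.\ bijective on $A^\Z$; I would show this by verifying $g^2 = \mathrm{id}$. The crucial point is that both $x^{a'}_1$ and $y^{a'}_1$ differ from $a'$ (by the definition of $\Omega^{a'}$), so the involution $\tau_{a'}$ permutes only tuples whose first coordinate is not $a'$. Applying the swap therefore never extends or shortens a maximal constant block, so the set of windows at which the rule ``triggers'' in $g(z)$ coincides exactly with that in $z$, with the content tuple merely replaced by its image under $\tau_{a'}$; a second application of $g$ then re-applies $\tau_{a'}$ to each recognized block and recovers $z$. The verification requires care for configurations containing several recognized prefix-plus-content patterns whose windows overlap, but the preservation of the prefix structure by each $\tau_{a'}$ and the separation of two such blocks forced by the required $N+1$ consecutive prefix symbols keep the analysis local and the composition manageable.
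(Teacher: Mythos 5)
Your overall strategy is the same as the paper's: use a long constant run as a marker, swap a block of data adjacent to the prefix boundary, and exploit the fact that $x^a_1,y^a_1\neq a$ to keep the construction consistent. (Your reduction of the $\Omega^0$ case to the $\bar\Omega$ case via the equivariant projection is fine, and the paper's convergence-of-a-sequence formulation versus your exact-agreement-on-a-prefix formulation is an inessential difference.) However, the local rule you define is not injective, so the constructed $g$ is not an element of $\Aut(A^\Z)$.

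The problem is that your trigger condition is keyed to the \emph{visible} length $L$ of the constant block starting at the left edge of the window, which makes the effective run-length threshold depend on the content offset $k$. Concretely, at a position which is the $k$-th symbol after a maximal run of $a'$ of length $R$, the window's left edge lies inside that run only when $R \geq K-k+1$; otherwise $w_{-K}$ is the symbol preceding the run and the rule does not fire (or fires on the wrong block). So content position $k=N$ triggers as soon as $R\geq N+3$, while content position $k=1$ requires $R\geq 2N+2$. Take $N\geq 2$, a run of $a'$ of length $R=2N+1$ followed by $(x^{a'}_1,\ldots,x^{a'}_N)$: your rule rewrites coordinates $2,\ldots,N$ to $y^{a'}_2,\ldots,y^{a'}_N$ but leaves coordinate $1$ equal to $x^{a'}_1$. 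The resulting hybrid tuple is fixed by $\tau_{a'}$, so $g^2\neq\mathrm{id}$, and in fact $g$ identifies this configuration with the one already carrying the hybrid tuple, so $g$ is not injective. This failure occurs for a single isolated run, before any of the overlap issues you flag at the end. The fix is essentially the paper's $SDE$ marker scheme: rather than inferring the offset from the visible block length, test at each offset $k$ for an exact pattern consisting of a fixed-length start marker $a^{T}$, the data, and an end marker, so that either the whole pattern matches (and all data coordinates are rewritten consistently) or none of it does; the paper additionally places the data \emph{inside} the constant run (replacing $a^k$ by $y^a_{[k]}$ just before the end marker $a\,x^a_{[k]}$), which makes the non-overlap verification routine.
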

We prove this theorem in Section~\ref{sec:cellular}.

\section{The full shift over $\Z$}
Given the construction of the previous section, the proof of our main
theorem is immediate.
\begin{proof}[Proof of Theorem~\ref{thm:main-z}]
  Note that $\bar\Omega$ is a topological boundary of $\Aut(A^\Z)$,
  since it is strongly proximal (Theorem~\ref{thm:proximal}) and
  minimal (Theorem~\ref{thm:minimal}). Since this action is a faithful
  action of $\Aut(A^\Z)/\Sigma(A^\Z)$, the claim follows by
  Theorem~\ref{thm:furman}.
\end{proof}

\section{The full shift over $\Z^d$}

In this section we extend our result to show that the amenable radical
of $\Aut(A^{\Z^d})$ is the group of shifts. We do this by essentially
reducing the higher dimensional case to the one dimensional case.

Fix a dimension $d$. For $k \in \N$, let $M_k$ be the basis for $\Z^d$
given by the rows of the following matrix:
\begin{align*}
  \begin{pmatrix}
    1 &k &0 &\cdots& 0 &0\\
    0 &1 &k &\cdots& 0 &0\\
    \vdots&\vdots&\vdots&\ddots&\vdots&\vdots\\
    0 &0 &0 &\cdots& 1 &k\\
    0 &0 &0 &\cdots& 0 &1
  \end{pmatrix}.
\end{align*}
Let ${\bf v}$ be the unit vector of the $d$\textsuperscript{th}
coordinate (which is also the last vector in $M_k$), and let
$U_k \subset \Z^d$ the span of the first $d-1$ vectors in $M_k$.  Then
every element of $\Z^d$ can be uniquely written as
${\bf u}+\ell \cdot {\bf v}$ where ${\bf u}\in U_k$ and $\ell\in \Z$.

The important property of $U_k$ is the following.
\begin{claim}
  \label{clm:norm}
  Every nonzero element ${\bf u} \in U_k$ has norm greater than $k$.  
\end{claim}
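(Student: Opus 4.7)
The plan is to write any $\mathbf{u}\in U_k$ as an integer combination of the first $d-1$ rows, then read off two distinct nonzero coordinates: one at least $k$ in absolute value and another with absolute value at least $1$.

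Concretely, let $\mathbf{m}_1,\ldots,\mathbf{m}_{d-1}$ denote the first $d-1$ rows of $M_k$, so $(\mathbf{m}_i)_\ell = 1$ if $\ell=i$, $k$ if $\ell=i+1$, and $0$ otherwise. Write $\mathbf{u}=\sum_{i=1}^{d-1} a_i \mathbf{m}_i$ with $a_i\in\Z$, and set the convention $a_0=a_d=0$. Then the $\ell$-th coordinate of $\mathbf{u}$ is
\begin{align*}
  u_\ell = a_\ell + k\,a_{\ell-1},
\end{align*}
valid for $\ell=1,\ldots,d$.

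Since $\mathbf{u}\neq 0$, at least one $a_i$ is nonzero. Let $j$ be the largest index with $a_j\neq 0$ and $i$ the smallest. Both lie in $\{1,\ldots,d-1\}$ and $i\leq j<j+1\leq d$, so the indices $i$ and $j+1$ are distinct. By maximality of $j$, we have $a_{j+1}=0$, so $u_{j+1}=k a_j$; since $a_j$ is a nonzero integer, $|u_{j+1}|\geq k$. By minimality of $i$, we have $a_{i-1}=0$ (or $i=1$, in which case this holds by convention), so $u_i = a_i\neq 0$, hence $|u_i|\geq 1$.

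Therefore the vector $\mathbf{u}$ has two distinct coordinates with absolute values at least $k$ and at least $1$ respectively, giving
\begin{align*}
  \|\mathbf{u}\|^2 \;\geq\; |u_{j+1}|^2 + |u_i|^2 \;\geq\; k^2 + 1 \;>\; k^2,
\end{align*}
and thus $\|\mathbf{u}\|>k$. The argument is essentially a bookkeeping one, and there is no real obstacle; the only place one must be slightly careful is in handling the edge cases $i=1$ and $j+1=d$, for which the boundary conventions $a_0=a_d=0$ give the correct coordinate expressions directly.
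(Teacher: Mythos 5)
Your proof is correct and follows essentially the same route as the paper's: write $\mathbf{u}$ as an integral combination of the first $d-1$ rows and observe that the coordinate just past the largest nonzero coefficient is a nonzero multiple of $k$. In fact you are slightly more careful than the paper, whose own proof only concludes norm \emph{at least} $k$ even though the claim asserts norm strictly greater than $k$; your extra observation about the coordinate at the smallest nonzero index supplies the missing $+1$ that makes the inequality strict.
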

\begin{proof}
  Represent ${\bf u}$ as an integral linear combination of the first
  $d-1$ vectors in $M_k$, and note that for ${\bf u}$ to be nonzero
  there must be a largest index $i$ such that the coefficient given by
  the $i$\textsuperscript{th} basis vector is nonzero.  This implies
  that the $i+1$\textsuperscript{st} coordinate is a nonzero multiple
  of $k$, which implies that the norm of ${\bf u}$ is at least $k$.
\end{proof}

Let $A^{\Z^d}_{U_k}$ be the subset of $A^{\Z^d}$ which is periodic mod
$U_k$; that is, under the natural shift action of $\Z^d$ on
$A^{\Z^d}$, $A^{\Z^d}_{U_k}$ is the set of $U_k$-invariant elements of
$A^{\Z^d}$. Note that $A^{\Z^d}_{U_k}$ is a closed subset of
$A^{\Z^d}$. We endow it with the induced topology. After proving a
simple claim, we will proceed to show how $A^{\Z^d}_{U_k}$ can be
identified with $A^\Z$.

The next claim follows directly from the definition of
$A^{\Z^d}_{U_k}$ and Claim~\ref{clm:norm}.
\begin{claim}
  \label{clm:ball}
  The projection of $A^{\Z^d}_{U_k}$ to $B_k$, the ball of radius $k$
  in $\Z^d$, is equal to $A^{B_k}$.
\end{claim}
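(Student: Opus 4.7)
The plan is to use Claim~\ref{clm:norm} to verify that $B_k$ embeds into the quotient $\Z^d/U_k$, and then, granted this, to extend any labeling $f \in A^{B_k}$ to a $U_k$-periodic configuration on $\Z^d$ by choosing coset representatives and copying values along $U_k$-orbits.

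First I would check that the restriction to $B_k$ of the quotient map $\Z^d \to \Z^d/U_k$ is injective. If ${\bf p} \neq {\bf q}$ are points of $B_k$ with ${\bf p} - {\bf q} \in U_k$, then ${\bf p} - {\bf q}$ is a nonzero element of $U_k$, and so by Claim~\ref{clm:norm} its norm exceeds $k$. But ${\bf p}$ and ${\bf q}$ both lie in $B_k$, and under the convention on ``ball of radius $k$'' used in the paper this forces $\|{\bf p} - {\bf q}\|$ to be at most $k$, giving a contradiction. Hence any two distinct points of $B_k$ represent distinct cosets of $U_k$.

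With this injectivity in hand, the construction of the extension is routine. Given $f \in A^{B_k}$, choose a complete set of coset representatives $F \subseteq \Z^d$ for $\Z^d/U_k$ with $B_k \subseteq F$ (possible by the previous step), and extend $f$ arbitrarily to $\tilde f \colon F \to A$. Define $x \colon \Z^d \to A$ by $x({\bf p} + {\bf u}) := \tilde f({\bf p})$ for ${\bf p} \in F$ and ${\bf u} \in U_k$. Then $x$ is $U_k$-periodic by construction (so $x \in A^{\Z^d}_{U_k}$) and $x|_{B_k} = f$, showing that the restriction map $A^{\Z^d}_{U_k} \to A^{B_k}$ is surjective. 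The reverse containment is trivial, so the claim follows.

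The only genuinely non-trivial step is the injectivity, which is a direct application of Claim~\ref{clm:norm}; the rest is a standard ``fundamental domain'' construction. The main thing to be careful about in a clean write-up is matching the norm convention used in Claim~\ref{clm:norm} with the convention used for the ball $B_k$, so that the strict bound ``greater than $k$'' on $\|{\bf p} - {\bf q}\|$ is incompatible with both points lying in $B_k$.
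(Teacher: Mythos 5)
Your overall strategy is exactly the intended one: the paper offers no proof beyond the assertion that the claim ``follows directly from the definition of $A^{\Z^d}_{U_k}$ and Claim~\ref{clm:norm}'', and the two steps you isolate --- injectivity of $B_k$ into $\Z^d/U_k$, followed by a fundamental-domain extension --- are precisely how that assertion must be unpacked. The extension step is fine as written.

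The problem is the injectivity step, and it is a genuine gap rather than a bookkeeping issue. Two distinct points ${\bf p},{\bf q}\in B_k$ satisfy only $\|{\bf p}-{\bf q}\|\leq 2k$, not $\|{\bf p}-{\bf q}\|\leq k$, under any standard reading of ``ball of radius $k$'' centered at the origin; meanwhile Claim~\ref{clm:norm} guarantees only that nonzero elements of $U_k$ have norm greater than $k$ (its own proof even concludes only ``at least $k$''). So the strict bound you invoke does not exclude a nonzero ${\bf u}\in U_k$ with $k<\|{\bf u}\|\leq 2k$ arising as a difference of two points of $B_k$. Concretely, for $d=2$ and $k=4$ the points $(0,-2)$ and $(1,2)$ both lie in $B_4$ and differ by $(1,4)\in U_4$, so every $U_4$-periodic configuration agrees at these two points and the projection to $B_4$ is a proper subset of $A^{B_4}$. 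Your closing remark about matching norm conventions correctly identifies the danger, but the body of your argument then asserts the false bound $\|{\bf p}-{\bf q}\|\leq k$ instead of resolving it; no choice of norm makes it true. To be fair, the defect is inherited from the paper, whose one-line justification hides the same factor of $2$, and it is harmless downstream: one can replace $U_k$ by $U_{2k+1}$ (equivalently, state the claim for the ball of radius $\lfloor k/2\rfloor$), and since Claim~\ref{clm:A_k} and the proof of Theorem~\ref{thm:main-z-d} only need some sublattice whose nonzero vectors exceed the diameter of the relevant memory ball, with $k$ taken large, nothing else breaks. A correct write-up should make this re-indexing explicit rather than assert the incompatible bounds.
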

That is, any $x \in A^{B_k}$ can be completed to an element of
$A^{\Z^d}_{U_k}$.

Using the obvious group isomorphism between $\Z$ and
$\Z \cdot {\bf v} \subset \Z^d$ (recall that ${\bf v}$ is the last
vector in $M_k$), we obtain the homeomorphism
$\pi \colon A^{\Z^d}_{U_k} \to A^\Z$ given by
\begin{align*}
  [\pi(x)]_n = x_{n \cdot {\bf v}}.
\end{align*}
Note that this is indeed a bijection since $\Z \cdot {\bf v}$ is a set
of representatives $\Z^d / U_k$, the cosets of $U_k$ in $\Z^d$. It is
straightforward to check that $\pi$ is also continuous.

Accordingly, we define a group homomorphism
$\phi_k \colon \Aut(A^{\Z^d}) \to \Aut(A^\Z)$ by setting
$\phi_k(g) = \pi \circ g \circ \pi^{-1}$.  Note that
$\pi^{-1} \circ \sigma \circ \pi$, the conjugation of the shift on
$A^\Z$ by $\pi$, is a shift on $A^{\Z^d}_{U_k}$. It follows that
$\phi_k(g)$ commutes with the shift $\sigma$, and hence the image of
$\phi_k$ is indeed in $\Aut(\Z)$.

Let $L_r \subset \Aut(A^{\Z^d})$ be the set of cellular automata with
memory less than $r$. That is, $g \in L_r$ if $[g(x)]_0$ is determined
by the restriction of $x$ to some ball of radius less than $r$ around
$0$.
\begin{claim}
  \label{clm:A_k}
  If $g \in L_k$ then $g$ is the unique element in
  $\phi_k^{-1}(g) \cap L_k$.
\end{claim}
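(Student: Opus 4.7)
The plan is to show that $\phi_k$ is injective when restricted to $L_k$. Suppose $g_1, g_2 \in L_k$ satisfy $\phi_k(g_1) = \phi_k(g_2)$; I want to conclude $g_1 = g_2$. Since both automata commute with the $\Z^d$-shift, it suffices to verify that $[g_1(x)]_0 = [g_2(x)]_0$ for every $x \in A^{\Z^d}$. By the memory hypothesis, each of these values depends only on $x|_{B_k}$ (in fact on a strictly smaller ball, but this is enough).

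The key input is Claim \ref{clm:ball}. Given any $x \in A^{\Z^d}$, I pick a $U_k$-periodic configuration $\tilde x \in A^{\Z^d}_{U_k}$ that agrees with $x$ on $B_k$; this is possible precisely because the projection $A^{\Z^d}_{U_k} \to A^{B_k}$ is surjective. The memory bound then gives $[g_i(x)]_0 = [g_i(\tilde x)]_0$ for $i=1,2$, so the problem reduces to showing $[g_1(\tilde x)]_0 = [g_2(\tilde x)]_0$ for every $\tilde x \in A^{\Z^d}_{U_k}$.

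Here I transfer the question to the one-dimensional side via $\pi$. Setting $y = \pi(\tilde x) \in A^\Z$, the definition $\phi_k(g_i) = \pi \circ g_i \circ \pi^{-1}$ gives $\phi_k(g_i)(y) = \pi(g_i(\tilde x))$, and reading off the zeroth coordinate (which corresponds to $0 \cdot {\bf v} = 0 \in \Z^d$) yields $[\phi_k(g_i)(y)]_0 = [g_i(\tilde x)]_0$. The hypothesis $\phi_k(g_1) = \phi_k(g_2)$ then forces the right-hand sides to coincide, which is what we needed.

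There is no real obstacle once Claim \ref{clm:ball} is in hand; that claim, itself a direct consequence of the norm bound in Claim \ref{clm:norm}, is exactly the assertion that the sublattice $U_k$ is sparse enough for $U_k$-periodic configurations to realize every local pattern on $B_k$. Everything else is just unwinding the definitions of $\phi_k$ and $\pi$ and using shift-equivariance.
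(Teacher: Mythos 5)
Your proof is correct and follows essentially the same route as the paper: both reduce the claim to the fact that an element of $L_k$ is determined by its action on $A^{\Z^d}_{U_k}$, which in turn rests on Claim~\ref{clm:ball}. You simply unwind the two sentences of the paper's proof in more explicit detail (using shift-equivariance to reduce to the zeroth coordinate and tracing the definition of $\phi_k$ through $\pi$), which is a fine elaboration but not a different argument.
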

\begin{proof}
  Every $g \in L_k$ is uniquely determined, among elements of $L_k$,
  by its action on $A^{\Z^d}_{U_k}$; this follows from
  Claim~\ref{clm:ball}. Since the kernel of $\phi_k$ is the kernel of
  the action $\Aut(A^{\Z^d}) \curvearrowright A^{\Z^d}_{U_k}$, it follows
  that if $g$ has memory less than $k$ then $g$ is the unique element
  in $L_k$ that is mapped to $\phi(g)$.
\end{proof}
Note that there may, however, be other elements of $\Aut(A^{\Z^d})$, which
will not be in $L_k$, whose action on $A^{\Z^d}_{U_k}$ is the same as
that of $g$.

We will exploit these homomorphisms $\phi_k$ for varying $k$ in order
to prove our theorem. We first note the following easy lemma.
\begin{lemma}
  \label{lemma:radical}
  Let $\phi: H\to K$ be a group homomorphism, and let $\sqrt H$ denote
  the amenable radical of $H$. Then
  $\sqrt H \subseteq \phi^{-1}(\sqrt K )$.
\end{lemma}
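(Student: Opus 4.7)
The plan is to push $\sqrt{H}$ forward through $\phi$. Specifically, I would show that $\phi(\sqrt{H})$ is a normal amenable subgroup of $K$; by maximality of the amenable radical this yields $\phi(\sqrt{H}) \subseteq \sqrt{K}$, and hence $\sqrt{H} \subseteq \phi^{-1}(\phi(\sqrt{H})) \subseteq \phi^{-1}(\sqrt{K})$, which is the desired conclusion.

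Amenability of $\phi(\sqrt{H})$ is routine: it is the homomorphic image of an amenable group, and amenability is preserved under quotients, as $\phi(\sqrt{H})$ is isomorphic to $\sqrt{H}/(\sqrt{H} \cap \ker\phi)$. For normality, I would use that $\sqrt{H}$ is normal in $H$ by definition, so for every $h \in H$ we have $h\sqrt{H}h^{-1} = \sqrt{H}$; applying $\phi$ gives $\phi(h)\phi(\sqrt{H})\phi(h)^{-1} = \phi(\sqrt{H})$, which shows $\phi(\sqrt{H})$ is normalized by every element of $\phi(H)$. In the surjective setting (or after restricting the codomain to $\phi(H)$) this is normality in $K$, and combined with amenability the maximality of $\sqrt{K}$ yields $\phi(\sqrt{H}) \subseteq \sqrt{K}$.

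This is essentially a one-step diagram chase, and I expect no genuine obstacle; the lemma is functoriality of the amenable radical under surjective homomorphisms. The only point requiring minor care is to ensure $\phi(\sqrt{H})$ is normalized by all of $K$ rather than only by $\phi(H)$, which is automatic when $\phi$ surjects onto $K$ (and in the application below the argument is used against $\sqrt{\phi_k(\Aut(A^{\Z^d}))} \subseteq \Sigma(A^{\Z})$, where restricting attention to the image suffices).
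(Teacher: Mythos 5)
Your argument is essentially the paper's own proof, which is the one-line observation that amenability and normality are both preserved under quotients (i.e.\ surjective homomorphisms), so the two approaches coincide. The surjectivity caveat you raise is genuine --- as literally stated the lemma fails for non-surjective $\phi$ (e.g.\ the inclusion $\Z \hookrightarrow F_2$ has $\sqrt{\Z} = \Z$ while $\phi^{-1}(\sqrt{F_2}) = \{0\}$) --- but the paper tacitly makes the same assumption, and the homomorphism $\phi_k$ used in the application is in fact surjective (a one-dimensional automaton extends to a $d$-dimensional one acting along the ${\bf v}$-axis), so the argument goes through there.
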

\begin{proof}
  This follows immediately from the fact that both amenable groups and
  normal subgroups are preserved under quotients.
\end{proof}

We are now in a position to prove the main theorem of this section.
\begin{proof}[Proof of Theorem~\ref{thm:main-z-d}]
  Choose $g \in \sqrt{\Aut(A^{\Z^d})}$. Then $g$ has memory less than $k$
  for some large enough $k$, i.e., $g \in L_k$.
  From~\ref{lemma:radical} it follows that
  $\phi_k(g) \in \sqrt{\Aut(\Z)}$, and so $\phi_k(g)$ is a shift
  $\sigma^m$, for some $m \in \Z$, by Theorem~\ref{thm:main-z}.

  We now claim that since $\phi_k(g)$ is a shift and since
  $g \in L_k$, then $g$ is a shift. Showing this will conclude the
  proof that the amenable radical of $\Aut(A^{\Z^d})$ is equal to the
  shifts. To see this, note that since $\phi_k(g) = \sigma^m$ then for
  every $x \in A^\Z$, $[\phi_k(g)x]_0 = x_m$. Hence, by the definition
  of $\phi_k$, for every $y \in A^{\Z^d}_{U_k}$ it holds that
  $[g y]_{\bf 0} = y_{m \cdot{\bf v}}$. By the definition of
  $A^{\Z^d}_{U_k}$, $y_{m \cdot{\bf v}} = y_{m \cdot{\bf v}+{\bf u}}$
  for every ${\bf u} \in U_k$. Since $g \in L_k$, it follows that the
  norm of $m \cdot{\bf v}+{\bf u}$ is at most $k$ for some ${\bf u}$.
  Therefore the shift by $m \cdot{\bf v}+{\bf u}$ is also in
  $L_k$. But by Claim~\ref{clm:A_k} the unique element that is both in
  $L_k$ and $\phi^{-1}(g)$ is $g$, and hence $g$ is the shift by
  $m \cdot{\bf v}+{\bf u}$.
  
\end{proof}

\section{Construction of cellular automata}
\label{sec:cellular}
\subsection{Defining cellular automata}
\label{sec:cellulars-general}
To define invertible cellular automata on $A^\Z$ we will use the
following general scheme\footnote{This scheme is a generalization of
  the one used in~\cite[Section 2]{boyle1988automorphism}.}. First, we
fix a {\em start marker} $S$ and an {\em end marker} $E$, where
$S \in A^k$ and $E \in A^{k'}$ for some $k,k' \in \N$. We choose an
$n \in \N$ and call some subset $\mathcal{D} \subseteq A^n$ the set of
possible {\em data}. Finally, we let $\pi$ be a bijection
$\pi \colon \mathcal{D} \to \mathcal{D}$.  In our constructions, $\pi$
will always be an involution. We then define a cellular automaton
$g \colon A^\Z \to A^\Z$ by the mapping $S D E \to S \pi(D) E$, where
the {\em data} $D$ is an element of $\mathcal{D}$. That is, let
$x \in A^\Z$, and denote $x_{m,m'} = x_m x_{m+1}\ldots x_{m'-1}$.
Then if $x_{m,m+k+n+k'} = S D E$ for some $D \in \mathcal{D}$ then
$[g(x)]_{m,m+k+n+k'} = S \pi(D) E$, and everywhere else $g$ is the
identity. We will use the following notation to define particular
automata. For example, if $S = 000$, $E = 111$,
$\mathcal{D} = \{2332,3223\}$ and $\pi(2332)=(3223)$ then we will
define $g$ by the diagram below, in which the data appear in boldface.
\begin{align*}
  \xymatrix{ 000{\bf 2332}111 \ar[d]^{g} \\
  000{\bf 3223} 111 \ar[u]}
\end{align*}

For such an automaton to be well defined, it suffices to show that no
two data matches overlap; that is, if $x_{m,m+k+n+k'} = S D E$ and
$x_{m',m'+k+n+k'} = S D' E$ for some $m \neq m'$ and
$D, D' \in \mathcal{D}$, then the data match intervals $[m+k,m+k+n)$
and $[m'+k,m'+k+n)$ do not intersect. To show that such an automaton
is invertible, it suffices to furthermore show that no data match
overlaps a marker match. That is, the data match interval $[m+k,m+k+n)$
does not intersect either of the marker match intervals $[m',m'+k)$
and $[m'+k+n,m'+k+n+k')$. We refer to these conditions below as the
{\em overlap} conditions.

We will need to slightly generalize this construction to cellular
automata where there is a collection of start markers
$(S_1,\ldots,S_\ell)$, corresponding end markers $(E_1,\ldots,E_\ell)$
corresponding data sets $(\mathcal{D}_1,\ldots,\mathcal{D}_\ell)$ and
corresponding bijections $(\pi_1,\ldots,\pi_\ell)$. As before, if
$x_{m,m+k+n+k'} = S_i D E_i$ for some $D \in \mathcal{D}_i$ then
$[g(x)]_{m,m+k+n+k'} = S_i \pi_i(D) E_i$. To ensure well-defindedness
and invertibility, similar overlap conditions need to apply. That is,
that no data match overlaps another data or marker match, whether of
the same index $i$ or not. To specify such automata we will use
similar diagrams, for example
\begin{align*}
  \xymatrix{ 000{\bf 2332}111 \ar[d]^{g} & 1000{\bf 43}00 \ar[d]^{g}
  \\
  000{\bf 3223} 111 \ar[u] & 1000{\bf 34}00 \ar[u]}
\end{align*}
Note that in this example the markers do overlap, but the data cannot.

\subsection{Proof of Theorem~\ref{thm:minimal}}

\begin{proof}[Proof of Theorem~\ref{thm:minimal}]
  We show that the $G^*$ action on $\bar\Omega$ is minimal. The proof
  that the action on $\Omega^0$ is minimal follows by the same
  argument.

  To this end, we choose arbitrary
  $\bar\omega,\bar\eta \in \bar\Omega$ and show that there exists a
  sequence $g_k \in G^*$ such that $\lim_k g_k\bar\omega = \bar\eta$.

  For each $a \in A$, define $x^a, y^a \in \Omega^a$ by
  $\bar\omega \cap \Omega^a = \{x^a\}$ and
  $\bar\eta \cap \Omega^a = \{y^a\}$. Denote
  $x_{[k]}^a = x_1^a\ldots x_k^a \in A^k$ and likewise
  $y_{[k]}^a = y_1^a\ldots y_k^a \in A^k$. 

  Define the transformation $g_k$ as follows.
  \begin{align*}
    \xymatrix{ a^{2^k}{\bf a^k }a x_{[k]}^a \ar[d]^{g_k}
    \\
    a^{2^k}{\bf y_{[k]}^a}a x_{[k]}^a \ar[u]}
    \quad\mbox{ for each }
    a \mbox{ s.t. } x_{[k]}^a \neq y_{[k]}^a
  \end{align*}
  Note that the choice of $2^k$ in the start marker is somewhat
  arbitrary; we could have chosen any function that is sufficiently
  larger than $k$.

  Note also that $x_1^a$ and $y_1^a$ are both not equal to $a$. Using
  this, and that fact that the end marker starts with $a$, it is
  straightforward (if tedious) to verify that the overlap conditions
  of Section~\ref{sec:cellulars-general} are satisfied.
  
  Finally, it is likewise easy to see that $\lim_kg_k x^a = y^a$ in
  particular, $[g_k(x^a)]_{[k]} = y_{[k]}$. Hence
  $\lim_kg_k\bar\omega = \bar\eta$.
  
\end{proof}

\subsection{Proof of Theorem~\ref{prop:extremely-proximal}}
Let $o \in \Omega^0$ be given by $o_0 = 0$ and $o_m = 1$ for all
$m>0$. Given $f \in \Omega^0$, let $r(f)$ measure the length of the
initial sequence of ones in $f$:
\begin{align*}
  r(f) = \min\{m \geq 0\,:\,f_{m+1} \neq 1\}.
\end{align*}
Note that $r(f)$ is well defined for $f \in \Omega^0$ except $o$; we
define $r(o) = \infty$. For $m \in \N$ define
\begin{align*}
  C_m = r^{-1}(m) = \{f \in \Omega^0\,:\,r(f) = m\}.
\end{align*}
Note that $\cup_{m=1}^\infty C_m=\Omega^0 \setminus \{o\}$, that each
$C_m$ is closed, and that $\lim_m C_m = \{o\}$ for all $m$.

We now define a sequence $\{g_k\}_{k > 0} \subset G^*$ as follows:
\begin{align*}
  \xymatrix{ 0^{2^k}{\bf 0^k}1^y0 \ar[d]^{g_k}
  \\
  0^{2^k}{\bf 1^k}1^y0 \ar[u]} \quad\mbox{ for each } 0 < y < k
\end{align*}
If $|A| \geq 3$, then in addition we let $a$ be any symbol in $A$ that
does not equal $0$ or $1$, and add to $g_k$ the following transformations:
\begin{align*}
  \xymatrix{ 0^{2^k}{\bf 0^k}1^ya \ar[d]^{g_k}
  \\
  0^{2^k}{\bf 1^k}1^ya \ar[u]} \quad\mbox{ for each } 0 \leq y < k \mbox{
  and } a \in A \setminus \{0,1\}
\end{align*}
For example, two transformations performed by $g_3$ are
\begin{align*}
  \xymatrix{ 00000000{\bf 000}110 \ar[d]^{g_3}& &00000000{\bf 000}2\ar[d]^{g_3} \\
   00000000{\bf 111}110 \ar[u]& &00000000{\bf 111}2\ar[u]}
\end{align*}

Using the fact that $y$ is strictly less than $k$, it is
straightforward to check the overlap conditions of
Section~\ref{sec:cellulars-general}, and hence each $g_k$ is a well defined
involution.

Now, note that if $r(f) < k$ then $r(g_k f) = r(f) + k$. Hence, for
all $k > m$ we have that $g_k C_m \subseteq C_{m+k}$. Hence
\begin{claim}
  \label{clm:lim-g-k}
  $\lim_kg_k C_m = \lim_k C_{m+k} = \{o\}$.
\end{claim}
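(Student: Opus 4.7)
The plan is to establish both equalities by directly computing Hausdorff distances to $\{o\}$ in the compact space $\Omega^0$. First I would fix the standard ultrametric $d(x,y) = 2^{-\min\{n : x_n \neq y_n\}}$ on $A^{\N}$, under which $\Omega^0$ is compact as a closed subset. I would then observe that any $f \in C_{m+k}$ satisfies $f_0 = 0$ and $f_1 = \cdots = f_{m+k} = 1$, so $f$ agrees with $o$ on the coordinates $0, 1, \ldots, m+k$ and hence lies within distance $2^{-(m+k+1)}$ of $o$. This gives the Hausdorff bound $d_H(C_{m+k}, \{o\}) \leq 2^{-(m+k+1)}$, which tends to $0$ and proves the outer equality $\lim_k C_{m+k} = \{o\}$.

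For the inner equality, I would use the containment $g_k C_m \subseteq C_{m+k}$ recorded just above the claim, which rests on the identity $r(g_k f) = r(f) + k$ valid when $r(f) < k$. Any element of $g_k C_m$ also lies in $C_{m+k}$, so the same uniform bound applies, yielding $d_H(g_k C_m, \{o\}) \leq 2^{-(m+k+1)}$. To rule out a collapse to the empty set, I would note that $C_m$ is nonempty for $m \geq 1$ (which is the only range relevant for the subsequent extreme proximality argument, since $C_m$ can degenerate only for $m = 0$ and $|A| = 2$); then $g_k C_m$ is nonempty and the Hausdorff limit is genuinely $\{o\}$.

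I do not expect a substantive obstacle here: the whole content is that cylinder sets with ever longer prescribed initial blocks shrink to their limiting configuration in the Hausdorff metric, and the transfer from $C_{m+k}$ to $g_k C_m$ is immediate once the containment is in hand. The only point requiring minor care is to avoid the naive reasoning that $g_k C_m \subseteq C_{m+k}$ automatically transfers Hausdorff convergence; this reasoning is rescued here because the outer limit is a singleton and the inner sets are nonempty.
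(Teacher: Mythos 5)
Your argument is correct and follows exactly the route the paper intends: the paper offers no separate proof of this claim, treating it as immediate from the containment $g_k C_m \subseteq C_{m+k}$ (for $k>m$) together with the earlier observation that the sets $C_{m+k}$ shrink to $\{o\}$. Your write-up simply makes this explicit with the product-topology metric and the (correct and worth noting) nonemptiness check that prevents the Hausdorff limit from degenerating.
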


\begin{proof}[Proof of Theorem~\ref{prop:extremely-proximal}]
  Without loss of generality, assume $a=0$. Let
  $C \subsetneq \Omega^0$ be closed. Then there exists a $g_0 \in G^*$
  such that $g_0C$ does not include $o$, by the minimality of the
  $G^*$ action on $\Omega^0$ (Theorem~\ref{thm:minimal}). Since $g_0C$
  is closed, it is disjoint from some neighborhood of $o$, and so it
  is contained in the finite union $\cup_{i=1}^m C_i$, for some $m$
  large enough. Let $\bar{g}_k = g_kg_0$, where $g_k$ is as defined
  above. Then
  $$\lim_k\bar{g}_k C  = \lim_kg_kg_0C \subseteq
  \lim_kg_k\cup_{i=1}^m C_i = \cup_{i=1}^m\lim_kg_k C_i =\{o\},$$ where
  the last equality follows from Claim~\ref{clm:lim-g-k}.  But the
  first limit cannot be an empty set, and so
  $$\lim_k\bar{g}_k C = \{o\}.$$
\end{proof}

\bibliography{aut_sigma}
\end{document}